\definecolor{darkred}{rgb}{0.75,0,0.3}
\newcommand\AND{\quad\text{and}\quad}
\newcommand\C{\mathbb C}
\newcommand\dt{\mathsf{det}}
\newcommand\dps{\displaystyle}
\newcommand\Fb{\mathbf{F}}
\newcommand\Gb{\mathbf{G}}
\newcommand\kr{\mathsf{ker}}
\newcommand\la{\lambda}
\newcommand\N{\mathbb N}
\newcommand\Prob{\mathbb{P}}
\newcommand\R{\mathbb R}
\newcommand\res{\mathsf{res}}
\newcommand\spec{\mathsf{spec}}
\newcommand\uno{\mathbf{1}}
\numberwithin{equation}{section}
\newtheoremstyle{mythm}
  {9pt}
  {9pt}
  {\itshape}
  {0pt}
  {\bfseries}
  {}
  { }
  {\thmnumber{(#2)}\thmname{ #1}\thmnote{ #3}}
\newtheoremstyle{mydef}
  {9pt}
  {9pt}
  {\normalfont}
  {0pt}
  {\bfseries}
  {}
  { }
  {\thmnumber{(#2)}\thmname{ #1}\thmnote{ #3}}
\theoremstyle{mythm}
\newtheorem{thm}[equation]{Theorem.}
\newtheorem{pro}[equation]{Proposition.}
\newtheorem{lem}[equation]{Lemma.}
\newtheorem{cor}[equation]{Corollary.}
\theoremstyle{mydef}
\newtheorem{exa}[equation]{Example.}
\begin{document}$\,$ \vspace{-1truecm}
\title{Polyharmonic functions for finite graphs and Markov chains}
\author{\bf Thomas Hirschler, Wolfgang Woess}
\address{\parbox{.8\linewidth}{Institut f\"ur Diskrete Mathematik,\\ 
Technische Universit\"at Graz,\\
Steyrergasse 30, A-8010 Graz, Austria\\}}
\email{thirschler@tugraz.at, woess@tugraz.at}
\date{\today} 
\begin{abstract}
On a finite graph with a chosen partition of the vertex set into interior and boundary vertices, 
a $\la$-polyharmonic function is a complex function $f$ on the vertex set which satisfies
$(\la \cdot I - P)^n f(x) = 0$ at each interior vertex. Here, $P$ may be the normalised 
adjaceny matrix, but more generally, we consider the transition matrix $P$ of an arbitrary
Markov chain to which the (oriented) graph structure is adapted. After describing these
``global'' polyharmonic functions, we turn to solving the \emph{Riquier problem,} where
$n$ boundary functions are preassigned and a corresponding ``tower'' of $n$ successive
Dirichlet type problems are solved. The resulting unique solution will be polyharmonic
only at those points which have distance at least $n$ from the boundary. Finally, we compare
these results with those concerning infinite trees with the end boundary, as studied
by Cohen, Colonnna, Gowrisankaran and Singman, and more recently, by Picardello and Woess.
\end{abstract}

\maketitle

\markboth{{\sf T. Hirschler and W. Woess}}
{{\sf Polyharmonic functions}}
\baselineskip 15pt

\section{Introduction}\label{sec:intro}

In the setting of the classical Laplacian $\Delta$ on
a Euclidean domain, or the Laplace-Beltrami operator on
a Riemannian manifold, a polyharmonic function $f$ is one
for which $\Delta^n f = 0$. Their study goes back to work in the $19^{\text{th}}$ century, see e.g. 
{\sc Almansi}~\cite{Al}. A basic reference is the monograph by {\sc Aronszajn, 
Creese and Lipkin}~\cite{ACL}. A more recent one is the volume by 
{\sc Gazzola, Grunau and Sweers}~\cite{GGS}, with a nice introduction 
to classical problems from elasticity  where polyharmonic (in fact biharmonic) functions and 
$\Delta^2$ come up.

While there is a huge  body of literature in the smooth case, the literature in
the discrete setting is quite restricted: an early reference is {\sc Voronkova}~\cite{Vo},
who analysed the discretized version of $\Delta^2 f = 0$ in a half-strip 
$[0\,,\infty] \times [0\,,\,H]$. Other quite early references are 
{\sc Yamasaki}~\cite{Ya} and {\sc Kayano and Yamasaki}~\cite{KY}
who investigated the Green kernel for the bi-Laplacian on an infinite network, 
and a follow-up of this is {\sc Venkataraman}~\cite{Ve}. Biharmonic Laplacians on trees
where also studied by {\sc Cohen, Colonna and Singman}~\cite{CCS1}, \cite{CCS2}, 
seemingly without link to \cite{Ya} and \cite{KY}. Prior to that, 
{\sc Cohen, Colonna, Gowrisankaran and Singman}~\cite{CCGS}
were the first to undertake a detailed study of polyharmonic functions on infinite, 
locally finite trees.
In particular, for the standard Laplacian arising from simple random walk on a regular tree,
they provided a boundary integral representation which is an analogue of Almansi's 
expansion of polyharmonic functions on the unit disk. (To get a flavour of the many 
close analogies between the potential theory of the unit disk and regular trees, 
the reader is invited to
the introductory sections of {\sc Boiko and Woess}~\cite{BW}.)
Recently, {\sc Picardello and Woess}~\cite{PW} extended the study of \cite{CCGS} and 
proved, among outher, a boundary integral representation of $\lambda$-polyharmonic 
functions (see below for more details)  for arbitrary nearest neighbour transition 
operators on countable trees, not necessarily required to be locally finite.  

In all this work, \emph{finite} graphs, resp. Markov chains had only marginal appearances:
in \cite{Ya} for the biharmonic Green function of finite subnetworks of an infinite network,
and in \cite{CCGS} for finite trees and an associated boundary value problem for biharmonic
functions.  {\sc Anandam}~\cite{An} also studies polyharmonic functions on finite subtrees
of infinite trees.

In the present note, we elaborate a detailed account of the general 
finite case, in which the mentioned potential theoretic questions turn into issues 
of linear algebra which can be solved rather easily.

\medskip

\textbf{The setting.} We start with a finite set $X$, subdivided into the disjoint union of 
two non-empty subsets $X^o$, the \emph{interior,} and $\partial X$, the \emph{boundary}.
On $X$, we consider a stochastic \emph{transition matrix} $P = \bigl( p(x,y) \bigr)_{x,y \in X}$ 
with the following
properties, where $p^{(n)}(x,y)$ denotes the $(x,y)$-entry of the matrix power $P^n$.
\begin{enumerate}
 \item[(i)] For all $x \in X^o$, there is $w \in \partial X$ such that $p^{(n)}(x,w) > 0$ for
some $n$.  
 \item[(ii)] For all $w \in \partial X$, we have $p(w,w) = 1$, and thus $p(w,x)=0$ for 
all $x \in X \setminus\{w\}$.
 \item[(iii)] For all $w \in \partial X$, there is $x \in X^o$ such that $p^{(n)}(x,w) > 0$ for
some $n$.  
\end{enumerate}

Thus, $X$ can be given the structure of a digraph, where we have an oriented edge $x \to y$ when
$p(x,y) > 0$. Then (i) means that the boundary can be reached from any interior point by
an oriented path, (ii) means that each boundary point is absorbing, i.e., the only outgoing
edge is a loop at that point, and (iii) means that every boundary point is active in the sense
that it is reached by some oriented path from an interior point. 
In probabilistic terms, we have a Markov chain (random process) on $X$, whose evolution is governed
by $P\,$: if the current position is $x$ then the next step is from $x$ to $y$ with probability
$p(x,y)$.

\begin{exa}\label{exa:network}
The most typical situation is the one where we start with a finite \emph{resisitive network}, that is, 
a connected, non-oriented  graph $(X,E)$ where each edge $e= [x,y] = [y,x]$ carries a positive
\emph{conductance} $a(e) = a(x,y)$. Then we choose our partition $X = X^o \cup \partial X$,
and we set $m(x) = \sum_y a(x,y)$. The transition probabilities become 
$p(x,y) = a(x,y)/m(x)$, if $x \in X^o$ and $y \in X$, while $p(w,w) = 1$ for $w \in \partial X$.
This defines a reversible Markov chain which is absorbed in $\partial X$, 
see e.g. {\sc Woess}~\cite[Ch. 4]{WMarkov}. In particular, setting all $a(x,y)$ equal to $1$,
the conductances correspond to the adjacency matrix. 
\end{exa}

The transition matrix $P$ acts on functions (column vectors) $f: X \to \C$ by
$$
Pf(x) = \sum_y p(x,y)f(y)\,,
$$
and the (normalised) graph Laplacian is $I-P$, where $I= I_X$ is the identity matrix over $X$.
It is typically defined on $X$ without assigning a boundary $\partial X$, but the study undertaken
here makes sense only in presence of absorbing points. 
Note that the more direct analogue of the (negative definite) smooth Laplacian would in reality
be $P-I$. 
More generally, we shall work with suitable variant of $\lambda \cdot I -P$ for 
$\la \in \C$.

A \emph{$\lambda$-harmonic function} $h: X \to \C$ is one for which 
\begin{equation}\label{eq:laharm}
Ph(x) = \lambda \,h(x) \quad \text{for every}\quad x \in X^o\,.
\end{equation}
When $\la =1$, we speak of a harmonic function. When speaking of 
$\lambda$-\emph{polyharmonic functions} of order $n$,
we have two possible approaches: one is to look for functions $f : X \to \C$ which satisfy
\begin{equation}\label{eq:global-poly}
(\lambda \cdot I - P)^n f = 0 \quad \text{on }\; X.
\end{equation}
These \emph{global} $\lambda$-polyharmonic functions can be easily described.

The more interesting version is related with the pre-assignment of boundary values.
Let $P_{X^o}$ and $Q$ be the restrictions of $P$
to $X^o \times X^o$ and $X^o \times \partial X$, respectively. Then we define the $\la$-Laplacian
as the matrix given in block-form by
\begin{equation}\label{eq:Lapl}
\Delta_{\lambda} = \begin{pmatrix}  \lambda\cdot I_{X^o} - P_{X^o} & \; - Q \\[3pt]
                                                                0 & 0
                   \end{pmatrix} = 
                   \begin{pmatrix} \lambda\cdot I_{X^o} & 0 \\[3pt]
                                                      0 & I_{\partial X}
                   \end{pmatrix} - P
\,,
\end{equation}   
where the $0$s stand for the zero matrices in the respective dimensions. Here, the 
identity matrix over $\partial X$ is \emph{not} multiplied by $\lambda$, so that functions
annihilated by $\Delta_{\lambda}$ are $\lambda$-harmonic only in $X^o$. 

Our main focus is on polyharmonic functions in the sense that they satisfy
\begin{equation}\label{eq:poly}
\Delta_{\lambda}^n f = 0 
\end{equation}
on $X$, or -- more reasonably, as we shall see -- on the ``$n$-th interior'' of
$X$, i.e., all points in $X^o$ from which $\partial X$ cannot be reached in less
then $n$ steps. 
When $\la = 1$, the two notions \eqref{eq:global-poly} and \eqref{eq:poly} coincide. 

This note is organized as follows. In \S \ref{sec:global}, we first consider ordinary
harmonic and polyharmonic functions, that is the case $\la =1$. After recalling the
well known solution of the \emph{Dirichlet problem} for harmonic functions with
preassigned boundary values (Lemma \ref{lem:Dir}), we explain why all global harmonic 
functions in the sense of \eqref{eq:global-poly} (with $\la=1$) are indeed harmonic 
(Proposition \ref{pro:freeharm}). Then we look at all global $\la$-polyharmonic 
functions as in \eqref{eq:global-poly}. In this case, $\la$ must belong to the spectrum
of $P_{X^o}$, and the solutions can be described in terms of a Jordan basis (Proposition
\ref{pro:global-lambda}).

In \S \ref{sec:local}, we turn to studying $\Delta_{\la}$ and its powers, for $\la$
in the resolvent set of $P_{X^o}$ (the spectrum being settled in \S 2). There is a 
direct analogue to the solution of the Dirichlet problem, and again, any function 
which satisfies $\Delta_{\la}^n f = 0$ on all of $X$ must be $\la$-harmonic
(Proposition \ref{pro:lafreeharm}).
Finally, we give the precise formulation of the \emph{Riquier} problem, 
which consists in assigning boundary functions $g_1\,,...,g_n$ and -- loosely spoken --
searching for a function $f$ such that the boundary values of $\Delta_{\la}^{r-1}f$
coincide with $g_r$ for $r=1, \dots, n$. That problem for the special case of
finite trees is briefly touched in \cite{CCGS}. Here, we provide the general solution
(Theorem \ref{thm:Riquier}). 

Finally, in \S \ref{sec:compare}, we undertake a comparison of those
results with the case of infinite trees without leaves, which was studied
recently in \cite{PW} by use of Martin boundary theory.

All results of this note are achieved by applying basic tools from Linear Algebra
in the right way. We believe that this material provides a useful basis, firstly as a
link to the classical, smooth case (regarding the Laplacian on bounded domains),
and secondly, as a basis for handling and understanding polyharmonic functions
not only on infinite trees, but also on more general infinite graphs and their
boundaries at infinity.

\section{The Dirichlet problem, and global $\la$-polyharmonic functions}\label{sec:global}

We start with some observations on the case $\la= n = 1$, that is, ordinary harmonic functions.
We start with a simple observation on $\spec(P_{X^o})$, the set of eigenvalues of $P_{X^o}$.

\begin{lem}\label{lem:spec}
The spectral radius 
$\rho = \rho(P_{X^o}) = \max \{ |\la| : \la \in \spec(P_{X^o})\}$
satisfies $\rho < 1$.
\end{lem}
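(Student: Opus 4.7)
The plan is to exploit the fact that $P$ is stochastic together with condition (i) to show that $P_{X^o}$ is a genuinely sub-stochastic matrix, with the absorption escape happening within a uniformly bounded time, and then to conclude via the operator $\infty$-norm.

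First I would observe that $P_{X^o}$ has non-negative entries and row sums at most $1$, since $\sum_{y \in X^o} p(x,y) \le \sum_{y \in X} p(x,y) = 1$ for each $x \in X^o$. Hence $\|P_{X^o}\|_\infty \le 1$, and more generally, for every $n \ge 1$,
\[
\sum_{y \in X^o} p_{X^o}^{(n)}(x,y) \;=\; 1 \;-\; \sum_{w \in \partial X} \sum_{k=1}^{n} \Prob\bigl[\text{first hit of } \partial X \text{ at time } k,\text{ location } w \mid \text{start } x\bigr],
\]
where $p_{X^o}^{(n)}(x,y)$ denotes the $(x,y)$-entry of $P_{X^o}^n$. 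In particular, the map $n \mapsto \sum_{y \in X^o} p_{X^o}^{(n)}(x,y)$ is monotone non-increasing.

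Next I would use condition (i): for each $x \in X^o$ there exist $n(x) \in \N$ and $w \in \partial X$ with $p^{(n(x))}(x,w) > 0$, which forces $\sum_{y \in X^o} p_{X^o}^{(n(x))}(x,y) < 1$. Since $X^o$ is finite, I can set $N = \max_{x \in X^o} n(x)$ and invoke the monotonicity above to get $\sum_{y \in X^o} p_{X^o}^{(N)}(x,y) < 1$ simultaneously for every $x \in X^o$. Taking the maximum over $x$ yields a constant
\[
\al \;=\; \max_{x \in X^o} \sum_{y \in X^o} p_{X^o}^{(N)}(x,y) \;<\; 1.
\]

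Finally I would conclude by Gelfand's formula: $\|P_{X^o}^N\|_\infty = \al$, whence
\[
\rho(P_{X^o})^N \;=\; \rho(P_{X^o}^N) \;\le\; \|P_{X^o}^N\|_\infty \;=\; \al \;<\; 1,
\]
so $\rho < 1$. I do not foresee a genuine obstacle here; the only point requiring a little care is the passage from the pointwise-in-$x$ statement supplied by (i) to a uniform one, which is handled by finiteness of $X^o$ together with monotonicity of the row sums of $P_{X^o}^n$ in $n$.
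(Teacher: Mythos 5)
Your proposal is correct and follows essentially the same route as the paper's own (outlined) proof: use condition (i) to make some power of $P_{X^o}$ strictly substochastic in every row, and conclude via the operator norm bound on the spectral radius; you merely fill in the details (monotonicity of row sums, finiteness of $X^o$, and $\rho(P_{X^o})^N = \rho(P_{X^o}^N) \le \|P_{X^o}^N\|_\infty$) that the paper leaves as ``one easily deduces''.
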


\begin{proof}[Proof (outline)]
Condition (i) on $P$ implies that for each $x \in X^o$, there is $n$ such that
$\sum_{v \in X^o} p^{(n)}(x,v) < 1$, that is, $P_{X^o}^n$ is strictly 
substochastic in the row of $x$. One easily deduces that there is $m$ such
that  $P_{X^o}^m$ is strictly substochastic in every row, which yields the claim. 
\end{proof}

The following \emph{solution of the Dirichlet problem} is folklore in the Markov chain 
community; see e.g. \cite[\S 6.A]{WMarkov}. It keeps being ``rediscovered'' by analysts 
who deviate into the discrete world, see for example {\sc Kiselman} \cite{Ki}. 

\begin{lem}\label{lem:Dir} 
 For every function $g: \partial X \to \C$ there is a unique harmonic function $h$ on $X$ such
that $h|_{\partial X} = g$. It is given by
$$
h(x) = \sum_{w\in \partial X} F(x,w) g(w)\,,
$$
where $F(x,w)$ is the probability that the Markov chain starting at $x$ hits $\partial X$ in
the point~$v$.
\end{lem}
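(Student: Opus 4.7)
The plan is to reduce the Dirichlet problem to a linear system in the interior using the block decomposition of $P$, invoke Lemma \ref{lem:spec} to solve it uniquely, and then recognize the resulting Neumann series as the probabilistic hitting distribution $F(x,w)$.

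Write any candidate $h$ in block form $h = \begin{pmatrix} h^o \\ g \end{pmatrix}$ with $h^o = h|_{X^o}$. Since $p(w,w) = 1$ on $\partial X$, the boundary rows of the equation $Ph = h$ are automatically satisfied. In the interior, using the decomposition of $P$ analogous to \eqref{eq:Lapl}, harmonicity becomes
\begin{equation*}
P_{X^o}\, h^o + Q\, g \;=\; h^o, \quad\text{i.e.,}\quad (I_{X^o} - P_{X^o})\, h^o \;=\; Q\, g.
\end{equation*}
By Lemma \ref{lem:spec}, $\rho(P_{X^o}) < 1$, so $I_{X^o} - P_{X^o}$ is invertible. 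Hence there is a unique $h^o$, given by $h^o = (I_{X^o}-P_{X^o})^{-1}Q g$; together with the boundary values $g$ this provides existence and uniqueness of $h$.

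The second step is to expand the inverse as a Neumann series, $(I_{X^o}-P_{X^o})^{-1} = \sum_{n=0}^\infty P_{X^o}^n$, which gives
\begin{equation*}
h(x) \;=\; \sum_{w \in \partial X}\Biggl(\sum_{n=0}^\infty \sum_{v \in X^o} p_{X^o}^{(n)}(x,v)\, p(v,w)\Biggr) g(w) \quad\text{for } x \in X^o.
\end{equation*}
The inner double sum has a clear probabilistic meaning: $p_{X^o}^{(n)}(x,v)$ is the probability that the chain, started at $x$, is at $v \in X^o$ at time $n$ without having touched $\partial X$ before; multiplying by $p(v,w)$ and summing over $n$ and $v$ yields exactly the probability that $\partial X$ is first hit at $w$, which is $F(x,w)$ (and since $\partial X$ is absorbing, ``first hit'' equals ``hit''). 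For $x \in \partial X$, we have $F(x,w) = \delta_{x,w}$, so the stated formula also reproduces the boundary condition $h(x) = g(x)$.

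The only nonroutine point is the combinatorial identification of the Neumann series with the hitting probability $F(x,w)$; this is standard first-passage decomposition and is immediate once one notes that the chain restricted to $X^o$ is governed by $P_{X^o}$ until it escapes to $\partial X$ via one step using $Q$. Everything else is linear algebra driven by Lemma \ref{lem:spec}.
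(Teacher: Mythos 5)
Your proposal is correct and follows essentially the same route as the paper: the paper also reduces harmonicity to $(I_{X^o}-P_{X^o})h^o = Qg$, gets uniqueness from the invertibility of $I_{X^o}-P_{X^o}$ (a consequence of Lemma \ref{lem:spec}), and identifies $F = \Gb(1)Q$ via the Neumann/power series expansion with the first-passage probabilities. The only cosmetic difference is that the paper additionally remarks that uniqueness can instead be deduced from the maximum principle, but your linear-algebra argument is the one the authors actually rely on.
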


We next want to describe the kernel $F(x,w)$ in matrix terminology. 
Let $\res(P_{X^o})= \C \setminus \spec(P_{X^o})$ be the resolvent set
of $P_{X^o}$. For $\la \in \res(P_{X^o})$, the \emph{resolvent} is the $X^o \times X^o$-matrix
\begin{equation}\label{eq:Green}
\Gb(\la) = \bigl( G(x,y|\la) \bigr)_{x,y \in X^o} = (\la \cdot I_{X^o} - P_{X^o})^{-1} 
\end{equation}
The kernels $G(x,y|\la)$ are called \emph{Green functions.} They are rational functions of
$\la$. Now we define the $X^o \times \partial X$-matrix
\begin{equation}\label{eq:F}
\Fb(\la) = \bigl( F(x,w|\la) \bigr)_{x \in X^o, w \in \partial X } = \Gb(\la) \,Q. 
\end{equation}
We can extend it to $X \times \partial X$ by setting $F(v,w|\la) = \delta_w(v)$ for 
$v,w \in \partial X$.
When $\la =1$, we just write $G(x,y)$  for $G(x,y|1)$ and $F(x,w)$ for $F(x,w|1)$. 
For $|\la| > \rho$, we can expand
$$
G(x,y|\la) = \sum_{n=0}^{\infty} p^{(n)}(x,y)/\la^{n+1} \AND
F(x,w|\la) = \sum_{n=0}^{\infty} f^{(n)}(x,w)/\la^n \,,
$$ 
where the probabilistic meaning is that for the Markov chain starting
at $x$, the probability to be at $y$ at time $n$ is $p^{(n)}(x,y)$,
while $f^{(n)}(x,w)$ is the probability that the first visit in $w \in \partial X$ 
occurs at time $n$.

Coming back to the Dirichlet problem, it is a straightforward
matrix computation to see that the function $h$, as defined in 
Lemma \ref{lem:Dir}, is harmonic. Its uniqueness follows from 
invertibility of $(I_{X^o} - P_{X^o})$. Instead, it may also be 
instructive to deduce uniqueness from the potential theoretic 
\emph{maximum principle:} every real valued harmonic function attains
its maximum on $\partial X$, see \cite[\S 6.A]{WMarkov}.

This also yields one way to see that the Markov chain must hit the boundary
almost surely, that is
$$
\sum_{w \in \partial X} F(x,w) = 1 \quad \text{for every }\;x\in X^o. 
$$
Namely, the unique harmonic extension of the constant boundary function $g \equiv 1$
is the constant function $h \equiv 1$ on $X$. Also, the function $x \mapsto F(x,w)$
provides the unique harmonic extension of the boundary function $g = \uno_v\,$.  

\begin{cor}\label{cor:mult}
The geometric and the algebraic multiplicity of the eigenvalue $\lambda = 1$ of $P$
coincide and are equal to $|\partial X|$. 
\end{cor}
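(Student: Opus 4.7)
The plan is to compute both multiplicities directly and observe that they agree. The key structural fact is that, because $p(w,w)=1$ for every $w \in \partial X$, the transition matrix $P$ has the block upper-triangular form
$$
P = \begin{pmatrix} P_{X^o} & Q \\[3pt] 0 & I_{\partial X} \end{pmatrix}.
$$
From this, the characteristic polynomial factors as
$$
\det(\la \cdot I - P) \;=\; \det(\la \cdot I_{X^o} - P_{X^o}) \cdot (\la - 1)^{|\partial X|}.
$$
By Lemma \ref{lem:spec}, $\rho(P_{X^o}) < 1$, so $1 \notin \spec(P_{X^o})$ and the first factor is nonzero at $\la = 1$. Hence $\la = 1$ is a root of exact multiplicity $|\partial X|$, giving the algebraic multiplicity.

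For the geometric multiplicity, I would use Lemma \ref{lem:Dir}. Define the linear map $\Phi : \C^{\partial X} \to \kr(I - P)$ sending $g$ to its (unique) harmonic extension $h$. Note that a function $h$ satisfies $Ph = h$ on all of $X$ if and only if it is harmonic on $X^o$ (on $\partial X$ the equation $Ph(w) = h(w)$ is automatic since $p(w,w) = 1$). Thus $\Phi$ indeed lands in $\kr(I - P)$. Injectivity is clear from $h|_{\partial X} = g$, and surjectivity follows because any $h \in \kr(I-P)$ is harmonic on $X^o$ and, by the uniqueness part of Lemma \ref{lem:Dir}, equals the harmonic extension of its own boundary restriction. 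Hence $\dim \kr(I-P) = |\partial X|$.

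Combining the two computations gives geometric multiplicity $= |\partial X| =$ algebraic multiplicity, which proves the corollary. There is no real obstacle: the argument is essentially bookkeeping, with the only substantive inputs being the block structure of $P$ (forcing the algebraic count) and the existence-and-uniqueness in Lemma \ref{lem:Dir} (forcing the geometric count). The mild point to be careful about is verifying that $Ph = h$ automatically holds on $\partial X$, so that harmonicity on $X^o$ really does produce a global eigenvector for eigenvalue $1$.
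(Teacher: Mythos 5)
Your proposal is correct and follows essentially the same route as the paper: the algebraic multiplicity via the block-triangular factorization of the characteristic polynomial together with Lemma \ref{lem:spec}, and the geometric multiplicity via the existence-and-uniqueness statement of Lemma \ref{lem:Dir}. You simply spell out the bijection between boundary data and eigenvectors (including the observation that $Ph=h$ is automatic on $\partial X$) in more detail than the paper's terse proof.
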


\begin{proof} Lemma \ref{lem:Dir} yields that the geometric multiplicity is $|\partial X|$.
The characteristic polynomial of the matrix $P$ is 
$$
\chi_P(\la) = \dt (\lambda \cdot I - P) = (\lambda -1)^{|\partial X|} \chi_{P^o}(\la).
$$
By Lemma \ref{lem:spec}, $\chi_{P^o}(1) \ne 0$.
\end{proof}

Now we can easily describe all \emph{free} polyharmonic functions of order $n \ge 1$,
that is, those which satisfy $(I-P)^n f = 0$ on $X$.

\begin{pro}\label{pro:freeharm}
A function $f: X \to \C$ satisfies  $(I-P)^n f = 0$ if and only if $f$ is harmonic.
\end{pro}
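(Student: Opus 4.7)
The plan is to reduce the statement to Corollary \ref{cor:mult}, which has just been proved. Once we know that the algebraic and geometric multiplicities of the eigenvalue $1$ of $P$ both equal $|\partial X|$, the Jordan normal form of $P$ contains only $1 \times 1$ Jordan blocks at this eigenvalue, so the generalised eigenspace $\ker(I - P)^n$ collapses onto the ordinary eigenspace $\ker(I - P)$ for every $n \ge 1$. The proposition is just a restatement of this collapse.

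First I would dispatch the easy direction: if $f$ is harmonic, then $Pf(x) = f(x)$ for $x \in X^o$ by definition, while $Pf(w) = p(w,w)\,f(w) = f(w)$ for $w \in \partial X$ by property (ii). Hence $(I-P)f = 0$ on all of $X$, and therefore $(I-P)^n f = 0$ trivially.

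For the converse, the inclusion $\ker(I-P) \subseteq \ker(I-P)^n$ is automatic, and it suffices to check the reverse dimension inequality. Corollary \ref{cor:mult} gives $\dim \ker(I-P) = |\partial X|$ and says that $|\partial X|$ is also the algebraic multiplicity of $1$ as an eigenvalue of $P$; but the algebraic multiplicity equals the dimension of the full generalised eigenspace $\bigcup_{k\ge 1}\ker(I-P)^k$, which is an upper bound for $\dim \ker(I-P)^n$. Therefore $\dim \ker(I-P)^n \le |\partial X| = \dim \ker(I-P)$, forcing equality of the two subspaces. Thus $(I-P)^n f = 0$ implies $(I-P)f = 0$, i.e., $f$ is harmonic.

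There is no real obstacle here: the whole content lies in the equality of algebraic and geometric multiplicities at $\la = 1$, which Corollary \ref{cor:mult} already provides via the factorisation $\chi_P(\la) = (\la-1)^{|\partial X|}\,\chi_{P_{X^o}}(\la)$ together with Lemma \ref{lem:spec}. The proof is then essentially a one-line Jordan-form argument.
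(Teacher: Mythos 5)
Your proof is correct, but it takes a genuinely different route from the paper's. The paper argues by induction on $n$: setting $h = (I-P)^{n-1}f$, one observes that $h$ is harmonic and that $h$ vanishes on $\partial X$ (because every row of $I-P$ indexed by a boundary point is zero), so by uniqueness in the Dirichlet problem (Lemma \ref{lem:Dir}) $h\equiv 0$, and one descends from $n$ to $n-1$. You instead invoke Corollary \ref{cor:mult} and read the statement as the collapse $\kr(I-P)^n = \kr(I-P)$ forced by the equality of algebraic and geometric multiplicities at $\la=1$; together with your (correct) observation that $\kr(I-P)$ is exactly the space of harmonic functions because the boundary rows of $I-P$ vanish, this settles both directions. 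Both arguments ultimately rest on Lemma \ref{lem:Dir}, but they package it differently: yours makes transparent that the proposition is precisely the semisimplicity of the eigenvalue $1$, and it dovetails nicely with Proposition \ref{pro:global-lambda}, where for $\la\in\spec(P_{X^o})$ nontrivial Jordan chains do produce genuinely higher-order global polyharmonic functions. The paper's inductive mechanism, on the other hand, is the one that transfers directly to the operator $\Delta_\la$ of \eqref{eq:Lapl} (which is not of the form $\la\cdot I - P$, so a multiplicity count at a single eigenvalue no longer applies verbatim) and is reused for Proposition \ref{pro:lafreeharm} and the Riquier tower in Section \ref{sec:local}. So your proof is a valid and arguably cleaner alternative for this particular statement, at the cost of being more tailored to the case $\la=1$.
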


\begin{proof}
Suppose $n\ge 2$, and let $h = (I-P)^{n-1}f$. Then $h$ is harmonic, and $(I-P)f = h$. Since
$(I-P)^{n-1}f = 0$ on $\partial X$, the function $h$ solves the Dirichlet problem
with boundary values $0$. Therefore $h = 0$, that is,  $(I-P)^{n-1}f = 0$. Proceeding
by induction, we obtain that $f$ is harmonic.
\end{proof}

Similarly, we can handle the case $(\la \cdot I -P)^n f = 0$, when $\la \ne 1$. First of all,
when $n \ge 2$ then the function $h= (\la \cdot I - P)^{n-1} f$ satisfies $Ph = \la \cdot h$. 
Second,  we see that $f = 0$ in $\partial X$, so (by abuse of notation) we consider $f$ as
a function on $X^o$. In other words, $\la \in \spec(P_{X^o})$. 

Let $\kappa= \kappa(\la)$ and 
$\mu = \mu(\la)$ be the algebraic and geometric eigenvalue multiplicities of $\la$. 
Let $h_1\,,\dots\,,h_{\mu}$ be a basis of $\kr(\la \cdot I_{X^o} - P_{X^o})$. 
For each $j \in \{1, \dots, \mu\}$, let $\kappa_j$ be the length of the associated
Jordan chain (= dimension of the associated Jordan block in the Jordan normal form).
That is, $\kappa_1 + \dots + \kappa_{\mu} = \kappa$, and 
we have functions $f_j^{(k)}$, $k=1, \dots, \kappa_j$ such that
$f_j^{(1)} = h_j$ and $(\la \cdot I_{X^o} - P_{X^o})f_j^{(k)} = f_j^{(k-1)}$ for 
$k \ge 2$. All those functions are extended to $X$ by assigning value $0$ on $\partial X$.
Then it is clear that 
$\;
\{ f_j^{(k)} : k=1, \dots, \kappa_j\,,\; j=1,\dots, \mu \}
\;$
is a basis of the linear space of all global $\la$-polyharmonic functions (of arbitrary order).
We subsume.

\begin{pro}\label{pro:global-lambda}
With the above notation, for $\la \in \spec(P_{X^o})$, the space of functions 
$f : X \to \C$ with $(\la \cdot I - P)^n f=0$ is spanned by 
$$
\bigl\{ f_j^{(k)} : k=1, \dots, \min\{n,\kappa_j\}\,,\; j=1,\dots, \mu \bigr\}.
$$
\end{pro}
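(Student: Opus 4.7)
The plan is to reduce the problem to a statement about the generalized eigenspace of the restricted matrix $P_{X^o}$ and then invoke the standard Jordan form theory that the authors have already set up in the paragraph preceding the statement.

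First I would show that any $f$ with $(\la\cdot I - P)^n f = 0$ must vanish on $\partial X$. For $w \in \partial X$ condition (ii) gives $Pf(w) = f(w)$, hence $(\la\cdot I - P)f(w) = (\la - 1)f(w)$, and by iteration $(\la\cdot I - P)^n f(w) = (\la-1)^n f(w)$. By Lemma \ref{lem:spec}, $\la \in \spec(P_{X^o})$ satisfies $|\la|<1$, so $\la \ne 1$, and therefore $f(w) = 0$.

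Next I would use the block decomposition of $P$ corresponding to the partition $X = X^o \cup \partial X$. Writing $f$ as a column $\binom{f^o}{f^\partial}$ with $f^\partial = 0$, and using the block form in \eqref{eq:Lapl} (observing that $\la\cdot I - P$ has $(\la-1)\cdot I_{\partial X}$ in the lower-right block), one sees that
\[
(\la\cdot I - P)^n \binom{f^o}{0} = \binom{(\la\cdot I_{X^o} - P_{X^o})^n f^o}{0}.
\]
Hence the equation $(\la\cdot I - P)^n f = 0$ on $X$ is equivalent to $f^\partial = 0$ together with $(\la\cdot I_{X^o} - P_{X^o})^n f^o = 0$.

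Finally I would read off the answer from the Jordan chain construction preceding the statement. The functions $f_j^{(k)}$ were built so that $(\la\cdot I_{X^o} - P_{X^o}) f_j^{(k)} = f_j^{(k-1)}$ (with $f_j^{(0)} := 0$), which iterates to $(\la\cdot I_{X^o} - P_{X^o})^n f_j^{(k)} = f_j^{(k-n)}$, and this vanishes precisely when $k \le n$. Thus the set in the statement lies in the kernel of $(\la\cdot I_{X^o} - P_{X^o})^n$. Linear independence is inherited from the Jordan basis $\{f_j^{(k)} : k=1,\dots,\kappa_j,\ j=1,\dots,\mu\}$ of the full generalized eigenspace, and a standard count (each Jordan block of size $\kappa_j$ contributes $\min\{n,\kappa_j\}$ to $\dim \kr (\la\cdot I_{X^o} - P_{X^o})^n$) shows the cardinality matches the dimension, so they span. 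There is no real obstacle here; the only item that requires a moment of care is the first step, to confirm that $\la \ne 1$ so that the boundary values of $f$ are forced to be zero — everything else is bookkeeping on the Jordan form.
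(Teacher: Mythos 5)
Your proposal is correct and follows essentially the same route as the paper: show $f$ must vanish on $\partial X$ (using $\la\ne 1$, which holds since $|\la|\le\rho(P_{X^o})<1$ by Lemma \ref{lem:spec}), reduce via the block form to $(\la\cdot I_{X^o}-P_{X^o})^n f^o=0$, and read off the answer from the Jordan chains. In fact you supply more detail than the paper, which compresses the boundary-vanishing step and the dimension count into a single ``it is clear that'' remark.
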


\begin{cor}\label{cor:network}
For a finite network with boundary as in Example \eqref{exa:network}, every global
$\la$-polyharmonic $h$ function satisfies $Ph =\la \cdot h$, and $\la \in \spec(P) \subset \R$.
Furthermore, $h$ vanishes on $\partial X$ when $\la \ne 1$. 
\end{cor}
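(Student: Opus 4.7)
The plan rests on reversibility. Since the conductances of the network satisfy $a(x,y) = a(y,x)$, the detailed balance identity $m(x)\, p(x,y) = a(x,y) = m(y)\, p(y,x)$ holds for all $x,y \in X^o$. I would reformulate this as the statement that $P_{X^o}$ is self-adjoint on the finite-dimensional Hilbert space $\ell^2(X^o, m)$ with inner product $\langle f, g\rangle_m = \sum_{x\in X^o} f(x)\overline{g(x)}\,m(x)$; equivalently, with $D$ the diagonal matrix of the values $m(x)$ on $X^o$, the matrix $D^{1/2} P_{X^o} D^{-1/2}$ is real symmetric.

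The finite-dimensional spectral theorem then yields at once that $\spec(P_{X^o}) \subset \R$ and that every eigenvalue of $P_{X^o}$ has equal algebraic and geometric multiplicities. In the notation of Proposition \ref{pro:global-lambda}, this means $\kappa_j = 1$ for every $j$, so for $\la \in \spec(P_{X^o})$ the space of global $\la$-polyharmonic functions is spanned by genuine $\la$-eigenfunctions of $P_{X^o}$ extended by $0$ on $\partial X$. Consequently any such $h$ satisfies $Ph = \la\,h$ on all of $X$ and $h|_{\partial X} = 0$.

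The remaining values of $\la$ are handled separately. For $\la = 1$, Lemma \ref{lem:spec} gives $1 \notin \spec(P_{X^o})$, so the previous paragraph does not apply; instead Proposition \ref{pro:freeharm} shows $h$ is harmonic, whence $Ph = h$ on $X^o$, while $Ph = h$ on $\partial X$ is automatic because $p(w,w) = 1$ there. For $\la \notin \spec(P_{X^o}) \cup \{1\}$, the block-triangular form of $P$ gives $\spec(P) = \spec(P_{X^o}) \cup \{1\}$, so $\la\cdot I - P$ is invertible and the only global $\la$-polyharmonic function is $h = 0$. This same decomposition, combined with $\spec(P_{X^o}) \subset \R$, also yields $\spec(P) \subset \R$.

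The main point to get right is the correct reformulation of reversibility: once $P_{X^o}$ is symmetrised by conjugation with $D^{1/2}$, the absence of nontrivial Jordan chains (and hence the collapse of Proposition \ref{pro:global-lambda} to honest eigenfunctions) is immediate, and everything else reduces to straightforward bookkeeping among the cases $\la \in \spec(P_{X^o})$, $\la = 1$, and $\la \notin \spec(P)$.
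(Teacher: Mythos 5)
Your proposal is correct and takes essentially the same route as the paper: the entire published proof consists of the observation that conjugating $P_{X^o}$ by the diagonal matrix of the $\sqrt{m(x)}$ makes it symmetric, so the spectrum is real and all Jordan blocks are trivial, which collapses Proposition \ref{pro:global-lambda} to genuine eigenfunctions. Your explicit bookkeeping of the cases $\la=1$, $\la\in\spec(P_{X^o})$, and $\la\notin\spec(P)$ merely spells out what the paper leaves implicit from the discussion preceding that proposition.
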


\begin{proof} If we define the diagonal matrix 
$M = \mathsf{diag}\bigl( \sqrt{m(x)}\, \bigr)_{x \in X^o}$,
then $M\, P_{X^o}\, M^{-1}$ is symmetric, so that the spectrum is real and the geometric and algebraic
multiplicities of the eigenvalues of $P_{X^o}$ coincide.
\end{proof}

\section{Boundary value problems for $\la$-polyharmonic functions}\label{sec:local}

In this section, we assume that $\la \in \res(P_{X^o})$ and study the operator (resp. matrix)
$\Delta_{\la}$ of \eqref{eq:Lapl} and its powers. 

\emph{Notation:} in accordance with the block 
form used above, for any function $f: X \to \C$ we write 
$f=\Bigl( {\dps f^o \atop \dps f^{\partial} } \Bigr)$, where $f^o = f|_{X^o}$ and 
$f^{\partial} = f|_{\partial X}\,$. Also, we write $\Delta_{\la}^o$ for the restriction of
the matrix of \eqref{eq:Lapl} to $X \times X^o$, that is, $\Delta_{\la}^o f = (\Delta_{\la} f)^o$. 

First of all, there is an obvious $\la$-variant of the solution of the Dirichlet problem.

\begin{lem}\label{lem:laDir} Let $\la \in \res(P_{X_o})$.
 For every function $g: \partial X \to \C$ there is a unique $\la$-harmonic function $h$ on $X$ such
that $h|_{\partial X} = g$. It is given by
$$
h(x) = \sum_{w\in \partial X} F(x,w|\la) g(w)\,, \quad x \in X^o\,, 
$$
where $F(x,w|\la)$ is defined by \eqref{eq:F}.
\end{lem}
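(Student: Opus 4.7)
The plan is to reduce the statement to a straightforward application of the invertibility of $\la \cdot I_{X^o} - P_{X^o}$, exactly as in the proof of the classical Dirichlet problem (Lemma \ref{lem:Dir}), but now with $\la$ in the resolvent set so that the Green matrix $\Gb(\la)$ exists.

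First I would write $h$ in block form as $h = \Bigl( {\dps h^o \atop \dps g} \Bigr)$, since the boundary values are prescribed. In accordance with the block decomposition of $P$ underlying \eqref{eq:Lapl}, the $\la$-harmonic condition $Ph(x) = \la\, h(x)$ for $x \in X^o$ reads, in matrix form,
\[
P_{X^o}\, h^o + Q\, g = \la \, h^o,
\]
which is equivalent to $(\la \cdot I_{X^o} - P_{X^o})\, h^o = Q\, g$. Since $\la \in \res(P_{X^o})$, the matrix $\la \cdot I_{X^o} - P_{X^o}$ is invertible with inverse $\Gb(\la)$, and hence this equation has the unique solution $h^o = \Gb(\la)\, Q\, g = \Fb(\la)\, g$, using the definition \eqref{eq:F}. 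Writing this out entry-wise yields the claimed formula $h(x) = \sum_{w \in \partial X} F(x,w|\la)\, g(w)$ for $x \in X^o$, while on $\partial X$ the function $h$ simply equals $g$.

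This single computation delivers both existence (the displayed formula gives a $\la$-harmonic function with the right boundary values) and uniqueness (any candidate $h$ with $h|_{\partial X} = g$ must satisfy the same linear system, whose solution is forced by the invertibility of $\la \cdot I_{X^o} - P_{X^o}$). There is essentially no obstacle here: the hypothesis $\la \in \res(P_{X^o})$ is precisely what makes the argument go through, and the role previously played by the stochastic/probabilistic interpretation of $F(x,w)$ in Lemma \ref{lem:Dir} is replaced by the purely algebraic identity $\Fb(\la) = \Gb(\la)\, Q$. The only thing worth flagging is that, unlike the case $\la = 1$, one no longer has a probabilistic maximum principle available as an alternative uniqueness argument, so the proof genuinely relies on the resolvent being defined at $\la$.
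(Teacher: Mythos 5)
Your proof is correct and follows essentially the same route as the paper: write $h$ in block form with prescribed boundary part $g$, observe that $\la$-harmonicity on $X^o$ is the linear system $(\la \cdot I_{X^o} - P_{X^o})h^o = Qg$, and invoke invertibility of $\la \cdot I_{X^o} - P_{X^o}$ to get existence and uniqueness via $h^o = \Gb(\la)Qg = \Fb(\la)g$. Nothing to add.
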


\begin{proof} We write 
$h=\Bigl( {\dps h^o \atop \dps g} \Bigr)$,
where $h^o= h|_{X^o}$ and $g$ is the given boundary function. Then the equation
$\Delta_{\la} = 0$ transforms into  
$$
(\la \cdot I_{X^o} - P_{X^o})h^o = Qg\,,
$$
which has the unique solution $h^o = \Gb(\la)Qg\,$, as proposed.
\end{proof}

Next, we note that
$$
\Delta_{\la}^n = \begin{pmatrix}  (\lambda\cdot I_{X^o} - P_{X^o})^n&  
                                       \;-(\lambda\cdot I_{X^o} - P_{X^o})^{n-1} Q \\[3pt]
                                                                0 & 0
                   \end{pmatrix}.
$$
Thus, if wo look for a solution of $\Delta_{\lambda}^n h = 0$ then with 
$h=\Bigl( {\dps h^o \atop \dps g} \Bigr)$ as above, we get the equation
$$
(\la \cdot I_{X^o} - P_{X^o})^n h^o = (\lambda\cdot I_{X^o} - P_{X^o})^{n-1}Qg\,,
$$
which has the same solution as in Lemma \ref{lem:laDir}. Thus, we have the following
general version of Proposition \ref{pro:freeharm}.

\begin{pro}\label{pro:lafreeharm}
A function $f: X \to \C$ satisfies  $\Delta_{\la}^n f = 0$ on all of $X$ 
if and only if $f$ is $\la$-harmonic.
\end{pro}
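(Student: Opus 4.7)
The plan is to exploit the block-triangular structure of $\Delta_\la$ together with the invertibility of $\la \cdot I_{X^o} - P_{X^o}$ that comes from the assumption $\la \in \res(P_{X^o})$.

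One direction is immediate: if $f$ is $\la$-harmonic, then by definition of $\la$-harmonicity and the construction of $\Delta_\la$, we have $\Delta_\la f = 0$, hence $\Delta_\la^n f = 0$ for every $n \geq 1$. So the content is in the converse.

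For the converse, I would first establish by induction on $n$ the block form of $\Delta_\la^n$ displayed just before the statement, namely
$$
\Delta_{\la}^n = \begin{pmatrix} (\la\cdot I_{X^o} - P_{X^o})^n & -(\la\cdot I_{X^o} - P_{X^o})^{n-1} Q \\[3pt] 0 & 0 \end{pmatrix}.
$$
This is a routine block-multiplication argument: the bottom row of blocks of $\Delta_\la$ is zero, so it remains zero under multiplication, and the upper-left block multiplies as $(\la I_{X^o} - P_{X^o})^n$ while the upper-right block picks up exactly one factor of $-Q$ preceded by $(\la I_{X^o} - P_{X^o})^{n-1}$. Writing $f = \bigl({\dps f^o \atop \dps f^\partial}\bigr)$, the equation $\Delta_\la^n f = 0$ then reduces to the single interior equation
$$
(\la\cdot I_{X^o} - P_{X^o})^n f^o \;=\; (\la\cdot I_{X^o} - P_{X^o})^{n-1} Q f^\partial.
$$

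Now I would invoke $\la \in \res(P_{X^o})$, which means $\la\cdot I_{X^o} - P_{X^o}$ is invertible, hence so is its $(n-1)$-st power. Multiplying both sides by the inverse of $(\la\cdot I_{X^o} - P_{X^o})^{n-1}$ yields $(\la\cdot I_{X^o} - P_{X^o}) f^o = Q f^\partial$, which is precisely the $\la$-harmonicity condition $\Delta_\la f = 0$, and by Lemma \ref{lem:laDir} this determines $f^o$ uniquely from $f^\partial$. There is no real obstacle here; the only point to watch is that the resolvent hypothesis is essential — without it, the cancellation of $(\la\cdot I_{X^o} - P_{X^o})^{n-1}$ fails, and indeed Proposition \ref{pro:global-lambda} shows that for $\la \in \spec(P_{X^o})$ there genuinely exist polyharmonic functions of higher order that are not $\la$-harmonic, so the statement as given is sharp.
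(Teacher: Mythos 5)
Your proof is correct and follows essentially the same route as the paper: compute the block form of $\Delta_{\la}^n$, reduce $\Delta_{\la}^n f = 0$ to the single interior equation, and cancel the invertible factor $(\la\cdot I_{X^o}-P_{X^o})^{n-1}$ using $\la\in\res(P_{X^o})$. Your added remark on sharpness via Proposition \ref{pro:global-lambda} is a nice touch but not needed.
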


For $n \ge 2$, what is more interesting is to assign further boundary conditions.
Recall that $\Delta_{\la}f$ always vanishes on $\partial X$. 
The analogue of the Dirichlet problem is the \emph{Riquier problem} of order $n$. We
assign $n$ boundary functions $g_1\,,\dots, g_n : \partial X \to \C$ and look
for a function $f: X \to \C$ such that we have a ``tower'' of boundary value problems for
functions $f_n\,,f_{n-1}\,, \dots, f_1=f : X \to \C$ as
follows: 
\begin{equation}\label{eq:Riquier} 
f_r  ={
\Bigl( {\dps f_r^o \atop \dps g_r} \Bigr)}\,,\quad \Delta_{\la}^o f_n = 0\,, \AND
\Delta_{\la}^o f_r = f_{r+1}^o\quad \text{for}\quad r= n\!-\!1, n\!-\!2, \dots, 1\,.   
\end{equation}

\begin{thm}\label{thm:Riquier} 
For $\la \in \res(P_{X^o})\,$, the unique solution $f = f_1$ of \eqref{eq:Riquier} is given by
$$
f(x) = \sum_{r=1}^n \bigl[\Gb(\la)^r \, Q\,  g_r\bigr](x) \,,\; x \in X^o\,,
$$ 
where $\Gb(\la)^r$ is the $r$-th matrix power of $\Gb(\la)$.
\end{thm}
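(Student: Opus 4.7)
The plan is to solve the tower of equations \eqref{eq:Riquier} from the top down, treating each level as an inhomogeneous variant of the $\la$-Dirichlet problem of Lemma \ref{lem:laDir}. Writing out $\Delta_{\la}^o$ in block form, the condition $\Delta_{\la}^o f_r = f_{r+1}^o$ together with the prescribed boundary trace $f_r^{\partial} = g_r$ is equivalent to
\begin{equation*}
(\la\cdot I_{X^o} - P_{X^o})\, f_r^o \;=\; Q\, g_r + f_{r+1}^o,
\end{equation*}
with the convention $f_{n+1}^o := 0$ so that the case $r=n$ reproduces $\Delta_{\la}^o f_n = 0$. Since $\la \in \res(P_{X^o})$, the matrix $\la\cdot I_{X^o} - P_{X^o}$ is invertible with inverse $\Gb(\la)$, hence this system has the unique solution $f_r^o = \Gb(\la)\bigl(Q\, g_r + f_{r+1}^o\bigr)$ once $f_{r+1}^o$ is known. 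Processing level by level from $r=n$ downward yields both existence and uniqueness at every step.

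Next I would close the formula by downward induction on $r$. The base case $r=n$ gives $f_n^o = \Gb(\la)\, Q\, g_n$. Assuming inductively that $f_{r+1}^o = \sum_{s=r+1}^n \Gb(\la)^{s-r}\, Q\, g_s$, substitution into the recursion produces
\begin{equation*}
f_r^o \;=\; \Gb(\la)\, Q\, g_r \,+\, \Gb(\la)\, f_{r+1}^o \;=\; \sum_{s=r}^n \Gb(\la)^{s-r+1}\, Q\, g_s.
\end{equation*}
Setting $r=1$ delivers the asserted formula for $f = f_1$.

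There is no serious obstacle here: the argument is bookkeeping of the index shift between $\Delta_{\la}^o f_r$ and $f_{r+1}^o$, correctly tracked by the increasing power of $\Gb(\la)$. The only point that warrants a small amount of care is verifying that the products $\Gb(\la)^r\, Q$ lie in the right dimensions ($X^o \times \partial X$) and that uniqueness propagates up the tower, which is automatic since the $r$-th level depends only on data already uniquely determined at levels $r+1,\dots,n$.
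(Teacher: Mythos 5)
Your proposal is correct and is essentially the paper's own argument: the paper phrases it as induction on the order $n$ (treating $f_2$ as the solution of the order-$(n-1)$ problem), while you unroll the same recursion $f_r^o = \Gb(\la)\bigl(Q\,g_r + f_{r+1}^o\bigr)$ by downward induction on $r$; both amount to inverting $\la\cdot I_{X^o}-P_{X^o}$ level by level from the top of the tower. The bookkeeping of the block form of $\Delta_{\la}^o$ and of the index shift is accurate, so nothing is missing.
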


\begin{proof} We use induction on $n$. For $n=1$, this is Lemma \ref{lem:laDir}.
Suppose the statement is true for $n-1$. The function $f_2$ is the solution of
the Riquier problem of order $n-1$ for the boundary functions $g_2\,,\dots, g_n$.
By the induction hypothesis, 
$$
f_2(x) = \sum_{r=2}^n \bigl[\Gb(\la)^{r-1}  \, Q \,g_r\bigr](x) \,,\; x \in X^o\,,
$$ 
and this is the unique solution. The last one of the ``tower'' of equations 
\eqref{eq:Riquier} is
$$
\Delta_{\la}^o f = f_2^o\,, \quad \text{where}\quad f  =
{\textstyle \Bigl( {\dps f^o \atop \dps g_1} \Bigr)}\,
$$ 
This can be rewritten as 
$$
(\la \cdot I_{X^o} - P_{X^o})f^o - Q\,g_1 = f_2^o\,. 
$$
Inserting the solution for $f_2^o$ and multiplying by $\Gb(\la)$, we
get the solution for $f$, and it is unique.
\end{proof}

Note that the solution $f$ does not satisfy \eqref{eq:poly} on all of $X^o$. This is
due to the fact that our discrete Laplacian is not infinitesmial. Let
\begin{equation}\label{eq:nbdry}
\partial^n X = \{ x \in X : p^{(k)}(x,w) > 0 \; \text{for some} \; w \in \partial X
\; \text{and} \; k \le n-1\}\,,
\end{equation}
the set of all points in $X$ from which $\partial X$ can be reached in $n-1$ or less
steps. Then $\Delta_{\la}^n f = 0$ only on the \emph{$n$-th interior} $X \setminus \partial^n X$,
while the values on $\partial^n X$ depend on the boundary functions $g_1\,,\dots, g_n\,$.
 
The functions $\la \mapsto G(x,y|\la)$ are rational, and the union of the set of their
poles is $\spec(P_{X^o})$. For $\la \in \res(P_{X^o})$, we can differentiate the identity\hfill
$
\la \cdot \Gb(\la) - P\,\Gb(\la) = I_{X^o}
$\\
$k$ times, and Leibniz' rule yields 
$$
(\la \cdot I_{X^o} - P_{X^o})\,\Gb^{(r)}(\la) = -k \cdot \Gb^{(r-1)}(\la)\,,
$$
where $\Gb^{(r)}(\la)$ is the (elementwise) $r$-th derivative of $\Gb(\la)$ with respect to
$\la$. From this, we get recursively for the matrix powers of $\Gb(\la)$
\begin{equation}\label{eq:deriveG}
\Gb(\la)^r = \frac{(-1)^{r-1}}{(r-1)!}\, \Gb^{(r-1)}(\la)\,.
\end{equation}
We can insert this in the formula of Theorem \ref{thm:Riquier} 
for an alternative form of the solution of the Riquier problem.

\section{Comparison with the case of infinite trees; examples}\label{sec:compare}

We now want to relate the preceding material, and in particular Theorem \ref{thm:Riquier}, 
with the potential theory of countable Markov chains, and more specifically, with Martin boundary theory
and $\la$-polyharmonic functions on trees, as studied in \cite{PW}. We choose and fix
a reference point (origin) $o \in X^o$ and consider the rational functions $\la \mapsto F(o,w|\la)$
of \eqref{eq:F}  for $\la \in \res(P_{X^o})$ and $w \in \partial X$. 
They have (at most) finitely many zeros.
Let 
$$
\res^*(P_{X^o}) = \res(P_{X^o}) \setminus 
\{ \la : F(o,w|\la) = 0 \;\text{for some} \; w \in \partial X\}\,.
$$
Every positive real $\la > \rho(P)$ belongs to $\res^*(P_{X^o})$, in particular,
$\la =1$. For $\la \in \res^*(P_{X^o})$, we define the \emph{$\la$-Martin kernel}
\begin{equation}\label{eq:K}
K^{(X)}(x,w|\la) = \frac{F(x,w|\la)}{F(o,w|\la)}\,, \quad x \in X\,,\; w \in \partial X\,.
\end{equation}
The function $x \mapsto K^{(X)}(x,w|\la)$ is the unique solution of the $\la$-Dirichlet
problem of Lemma \ref{lem:laDir} with value $1$ at the root $o$ and the boundary function 
$g_v$ proportional to $\delta_w\,$, that is
$g_w(v) = \delta_w(v)/F(o,w|\la)$. Thus, for a  generic boundary function 
$g :  \partial X \to \C$, we can write the solution of the $\la$-Dirichlet
problem for $x \in X^o$ as
\begin{equation}\label{eq:DirK}
\begin{aligned} h(x) &= \sum_{w\in \partial X} K^{(X)}(x,w|\la) \nu(w) 
=: \int_{\partial X} K^{(X)}(x,\cdot\,|\la)\,d\nu \;
\quad (x \in X^o), 
\; \text{ where}\\ \nu(w) &=  g(w)\, F(o,w|\la).
\end{aligned}
\end{equation}
The integral notation indicates that we think of $\nu=\nu_g$ as a complex distribution 
on $\partial X$. In the same way, the solution of the Riquier problem in Theorem 
\ref{thm:Riquier} can be written as
\begin{equation}\label{eq:RiqK}
\begin{aligned}
&f(x) = \sum_{r=1}^n \int_{\partial X}
K^{(X)}_r(x,\cdot\,|\la) \,  d\nu_r\,, \quad \text{where for }\;
w \in \partial X\\
&K^{(X)}_r(\cdot,w|\la) = \Gb(\la)^{r-1} K(\cdot,w|\la)
\AND \nu_r(w) = g_r(w)\, F(o,w|\la)\,.
\end{aligned}
\end{equation}

Now let us look at the case of a nearest neighbour transition operator $P = P_T$ on a countable
tree $T$ without leaves (i.e., vertices 
distinct from $o$ which have just one neighbour): there, the geometric boundary is attachted to the tree ``at infinity'',
and there is no ``interior'' of $T$ which appears as a subset of the vertex set: the interior
is $T$ itself. The Martin kernel $K^{(T)}(x,\xi|\la)$ is defined for $x \in T$ and $\xi \in \partial T$,
and it satisfies $(\la \cdot I - P) K^{(T)}(\cdot,\xi|\la) = 0$, without any restriction to a sub-matrix
such as $P_{X^o}\,$. In this setting, \cite[Thm. 5.4]{PW} says that any $\la$-polyharmonic
function $f$ of order $n$ on $T$ has a unique representation of the form
\begin{equation}\label{eq:polyT}
\begin{aligned}
f(x) &= \sum_{r=1}^n \int_{\partial T} K^{(T)}_r(x,\cdot\,|\la) \,  d\nu_r\,, 
\qquad \text{where} \\
K^{(T)}_r(x,\xi|\la) &= \frac{(-1)^{r-1}}{(r-1)!} \, \frac{d^{r-1}}{d\la^{r-1}} K(x,\xi|\la)
\qquad (x \in T,\; \xi \in \partial T),
\end{aligned}
\end{equation}
and $\nu_1\,,\dots, \nu_n$ are distributions on $\partial T$. 
The normalization is  slightly different here from the one chosen in \cite{PW}, and in particular,
\begin{equation}\label{eq:nrm}
(\la \cdot I_T - P_T)K^{(T)}_r(\cdot,\xi|\la) = K^{(T)}_{r-1}(\cdot,\xi|\la) \quad 
\text{for }\; r \ge 2\,. 
\end{equation}
 Let us compare the kernels $K^{(X)}_r$ and $K^{(T)}_r$. We have
\begin{equation}\label{eq:compare}
\begin{aligned}
(\la \cdot I_{X^o} - P_{X^o})^{r-1} K^{(X)}_r (\cdot,w|\la) &= K^{(X)} (\cdot,w|\la) 
\quad \text{for} \quad w \in \partial X\,,
\AND\\ 
(\la \cdot I_{T} - P_{T})^{r-1} K^{(T)}_r (\cdot,\xi|\la) &= K^{(T)} (\cdot,\xi|\la)
 \quad \text{for} \quad \xi \in \partial T\,.
\end{aligned}
\end{equation}
The only, but crucial difference is that in the first of the two identities, we
may multiply from the left by $\Gb^{(X)}(\la)^{r-1} = (\la \cdot I_{X^o} - P_{X^o})^{-(r-1)}$.
In the second identity, we may \emph{not} multiply by $\Gb^{(T)}(\la)^{r-1}$, where
$\Gb^{(T)}(\la) = (\la \cdot I_{T} - P_{T})^{-1}$ is the resolvent of $P$ as an operator
on the Hilbert space $\ell^2(T,m)$, with the weights $m(x)$ analogous to Example \ref{exa:network}
above. Indeed, $K^{(T)} (\cdot,\xi|\la)$ does in general not belong to $\ell^2(T,m)$. 

\medskip

\textbf{``Forward only'' Laplacians on finite and infinite trees}

We now consider a class of examples which constitute the finite analogue of \cite[\S 6]{PW}.
They were also studied, from the viewpoint of Information Theory, by {\sc Hirschler and
Woess}~\cite{HiWo}. 

In order to carry the above comparison with the infinite
case a bit further, we need some more details on the geometry of an infinite tree $T$ with root
$o$. We assume that $T$ is locally finite and has no leaves. Each vertex $x \ne o$ has a 
unique \emph{predecessor} $x^-$, its neighbour which is closer to $o$. 
For each $x \in T$ there is the unique \emph{geodesic path} $\pi(o,x) = [o=x_0\,,x_1\,,\dots, x_n=x]$
from $o$ to $x$, where $x_k^- = x_{k-1}$ for $k=1,\dots, n$. In this case, $|x|=n$
is the \emph{length} of $x$.

The boundary at infinity $\partial T$ of $T$ consists of all geodesic rays 
$\xi = [o=x_0\,,x_1\,,x_2\,,\dots]$, where $x_k^- = x_{k-1}$ for $k \ge 1$. For a vertex $x \in T$,
we define the \emph{boundary arc}
$$
\partial_x T = \{ \xi \in \partial T : x \in \xi \}\,.
$$
The collection of all $\partial_x T\,$, $x \in T$, is the basis of a topology on $\partial T$,
which thus becomes a compact, totally disconnected space, and each boundary arc is open and compact. 
We now take a Borel probability measure $\Prob$ on $\partial T$ which is supported
by the entire boundary, that is, $\Prob(\partial_x T) > 0$ for all $x \in T$. 
It induces a \emph{forward only} Markov operator on $T$, as follows.
\begin{equation}\label{eq:forw}
p(x,y) = \begin{cases} \Prob(\partial_y T)/\Prob(\partial_x T)\,,&\text{if }\; y^- = x\,,\\
                       0\,,&\text{otherwise.}
         \end{cases}
\end{equation}
Conversely, if we start with transition probabilities $p(x,y)$ such that $p(x,y) > 0$
precisely when $y^- = x$, then we can construct $\Prob$ on $\partial_x T$
by setting 
$$
\Prob(\partial_x T) = p(o,x_1)p(x_1,x_2) \dots p(x_{n-1},x)\,,\quad\text{it}\quad
\pi(o,x) = [o=x_0\,,x_1\,,\dots, x_n=x].
$$
This determines $\Prob$ on the Borel $\sigma$-algebra of $\partial T$.  

More generally, a \emph{distribution} on $\partial T$ is a set function
\begin{equation}\label{eq:distr}
\nu: \{ \partial_x T : x \in T \} \to \C \quad \text{with} \quad
\nu(\partial_x T) =\sum_{y: y^-=x} \nu(\partial_y T)\quad \text{for all }\; x \in T\,.  
\end{equation}
If $\nu$ is non-negative real, then it extends uniquely to a Borel measure on $\partial T$.
A \emph{locally constant function} $\varphi$ on $\partial T$ is one such that every 
$\xi \in \partial T$ has a neighbourhood on which $\varphi$ is constant. Thus, one can write
it as a finite linear combination of boundary arcs
$$
\varphi = \sum_{j=1}^m c_j\,\uno_{\partial_{x(j)} T} \,,
$$
and we can define 
$$
\int_{\partial T} \varphi\, d\nu = \sum_{j=1}^{m} c_j \,\nu(\partial_{x(j)} T)\,.
$$
Indeed, in this way, the space of all distributions is the dual of the linear space of
all locally constant functions on $\partial T$.

Now take $\la \in \C \setminus \{0\}$. Following \cite[\S 6]{PW}, the $\la$-Martin kernel
on $T$ is
$$
K^{(T)}(x,\xi|\la) = \begin{cases} \la^{|x|}/\Prob(\partial_x T)\,,
                                        &\text{if }\;\xi \in \partial_x T\,,\\
                                   0\,, &\text{otherwise.}
                     \end{cases}
$$
For fixed $x$, the function $\xi \mapsto  K^{(T)}(x,\xi|\la)$
and its derivatives with respect to $\la$ are locally constant, whence they
can be integrated against distributions on $\partial T$.
According to \eqref{eq:polyT}, we get 
\begin{equation}\label{eq:KTr}
K^{(T)}_r(x,\xi|\la) = \begin{cases} (-1)^{r-1}\,\la^{|x|-(r-1)}{\displaystyle {|x| \choose r-1}} 
                                    \dfrac{1}{\Prob(\partial_x T)}
                                    \,,&\text{if }\;\xi \in \partial_x T\,,\\[3pt]
                                   0\,, &\text{otherwise,}
                       \end{cases}
\end{equation}
and every $\la$-polyharmonic function of order $n$ on $T$ has a unique representation
\begin{equation}\label{eq:polTfw}
 f(x) = \sum_{r=1}^n (-1)^{r-1}\,\la^{|x|-(r-1)}{|x| \choose r-1} 
\frac{\nu_r(\partial_x T)}{\Prob(\partial_x T)}\,,
\end{equation}
where the $\nu_r = \nu_r^{(T)}$ ($r=1,\dots,n$) are distributions on $\partial T$. 

\smallskip

We now consider the finite situation.
The graph $X$ under consideration is a finite subtree of $T$ with the same root $o$. 
The boundary consists of the \emph{leaves} of the tree:
$$
\partial X = \{ w \in X : w \ne o\,,\; \deg(w) = 1\}\,. 
$$
We suppose that $\partial X$ is a \emph{section} of $T$ in the sense of \cite{HiWo}: 
For every $\xi \in \partial T$, the geodesic ray starting from $o$ that represents $\xi$
intersects $\partial X$ in a unique vertex. (A typical special case is the one where
$\partial X = \{ x \in T : |x|= L\}$ with $L \in \N$.) 
For each $x \in X$, we define the finite version of 
the boundary arc rooted at $x$ as
$$
\partial_x X = \{ w \in \partial X : x \in \pi(o,w) \}.
$$
In particular, $\partial_o X = \partial X$, and $\partial_w X = \{w\}$ for $w \in \partial X$. 

We consider the restriction to $X$ of the given forward transistion matrix $P_T$ on
$T$. That~is, 
$$
p_X(x,y) = \Prob(\partial_y T)/\Prob(\partial_x T)\,,\; \text{ if }\; y^- = x \in X^o,
\AND p_X(w,w) = 1 \; \text{ if }\; w \in \partial X\,,
$$
while $p_X(x,y)=0$ in all other cases. Exactly as on the whole tree, we have for $x, y \in X$ 
$$
\begin{aligned}
&p^{(n)}(x,y) > 0 \iff x \in \pi(o,y) \; \text{and} \; n = |y|-|x| \,,\\
&\text{and then} \quad
 p^{(n)}(x,y) = \Prob(\partial_y T)/\Prob(\partial_x T)\,.
\end{aligned}
$$
The matrix $P_{X^o}$ is nilpotent, so that $\spec(P) = \{ 0, 1\}$, and the algebraic multiplicities
of those two eigenvalues are $|X^o|$ and $|\partial X|$, respectively.
For $\la \in \C \setminus \{0\} = \res(P_{X^o})$ and $x, y \in X^o$, we have
$$
G(x,y|\la) = \begin{cases} \la^{-d(x,y)-1}\, \Prob(\partial_y T)/\Prob(\partial_x T)\,,
                                            &\text{if }\; x \in \pi(o,y)\,,\\
                           0 \,, &\text{otherwise.}
             \end{cases}
$$
Therefore in this example, the right hand side of \eqref{eq:deriveG} is obtained by
$$
\frac{(-1)^{r-1}}{(r-1)!}\, G^{(r-1)}(x,y|\la) 
= \la^{-d(x,y)-r}\, {d(x,y) + r-1 \choose r-1} \, \Prob(\partial_y T)/\Prob(\partial_x T)\,,
$$
if $x \in \pi(o,y)\,$. We note that $\res^*(P_{X^o}) = \res(P_{X^o})$ and that
$F(o,w|\la) = \la^{-|w|}\, \Prob(\partial_w T)$ for $w \in \partial X$. We can 
now compute the kernels $K^{(X)}_r$ of \eqref{eq:RiqK} as follows:
\begin{equation}\label{eq:Kr}
K^{(X)}_r(x,w|\la) = \begin{cases} {\displaystyle \la^{|x|-r+1} \, {d(x,w) + r-2 \choose r-1}\, 
                                          \frac{1}{\Prob(\partial_x T)}}\,,
                                         &\text{if }\; w \in \partial_x T\,,\\
                         0\,,&\text{otherwise.}
\end{cases}
\end{equation}
Then, given boundary functions $g_1\,,\dots, g_n\,$, the associated solution of the Riquier problem
reads
$$
f(x) = \sum_{r=1}^n \int_{\partial X} K^{(X)}_r(x,\cdot\,|\la) \,d\nu^{(X)}_r\,,\quad \text{with}\quad 
\nu^{(X)}_r(w) = \la^{-|w|} \,g_r(w)\, \Prob(\partial_w T)\,.
$$
Now consider \eqref{eq:compare} and the fact that $P_{X^o}$ is the restriction of $P_T$ to $X^o$.
In spite of this, when $n \ge 2$
we see that for $w \in \partial X$, the function $x \mapsto K_n^{(X)}(x,w|\la)$ is 
\emph{not} the restriction to $X^o$ of $x \mapsto K_n^{(T)}(x,\xi|\la)$, where $\xi \in \partial_w X$. 
(The value is the same for every such $\xi$, when $x \in X^o$.)
For a closer look, fix $\xi \in \partial_w T$ and let $f(x) = K_n^{(T)}(x,\xi|\la)$ for $x \in X$.
This function solves the Riquier problem on $X$ with boundary functions 
$$
g_r(v) = K^{(T)}_{n+1-r}(w,\xi|\la)\,\delta_w(v)\,,\quad v \in \partial X\,,
$$
or, equivalently, with boundary measures on $\partial X$ 
$$
\nu^{(X)}_r = (-\la)^{n-r} {|w| \choose n-r}\delta_w\,.
$$
Indeed, verification of
$$
K^{(T)}_n(x,\xi|\la) = \sum_{r=1}^n \int_{\partial X} K^{(X)}_r(x,\cdot\,|\la)\, d\nu^{(X)}_r
$$
leads to known combinatorial identity
$$
{|w| \choose n-1} = \sum_{r=1}^n (-1)^{n-r} {|w|-|x|-r-2 \choose r-1}{|w| \choose n-r},
$$
in which $|w|$ and $|x|$ can be arbitrary integers with $|w| > |x| \ge 0$.

\end{document}